\numberwithin{equation}{section}
\numberwithin{figure}{section}
\theoremstyle{plain}
\newtheorem{thm}{\protect\theoremname}[section]
\theoremstyle{remark}
\newtheorem{rem}[thm]{\protect\remarkname}
\theoremstyle{plain}
\newtheorem{prop}[thm]{\protect\propositionname}
\newtheorem{cor}[thm]{\protect\corollaryname}
\theoremstyle{definition}
\newtheorem{example}[thm]{\protect\examplename}
\providecommand{\corollaryname}{Corollary}
\providecommand{\examplename}{Example}
\providecommand{\propositionname}{Proposition}
\providecommand{\remarkname}{Remark}
\providecommand{\theoremname}{Theorem}
\begin{document}
\subjclass[2020]{Primary: 46E22; Secondary: 30H45, 46L07, 47A20, 47B32, 47B33.}
\title{A Kernel Approach to the Stinespring and Kraus Representations}
\author{James Tian}
\begin{abstract}
We give a self-contained derivation of the Stinespring theorem for
completely positive maps and the Kraus representation for normal completely
positive maps using two scalar positive definite kernels. The first
kernel realizes the minimal Stinespring space directly as a reproducing
kernel Hilbert space, without passing through a quotient construction,
while the second separates the multiplicity space and produces the
Kraus operators as its coordinate operators. 

For holomorphic self-maps $f$ of the disk, this identifies the de
Branges-Rovnyak space $\mathcal{H}\left(f\right)$ as the canonical
Kraus multiplicity space of the associated map $\Phi_{f}$ on $B\left(H^{2}\right)$.
Composition of symbols is reflected by canonical isometries between
these multiplicity spaces, and the same kernel formulas describe the
iterates of $\Phi_{f}$. We show in particular that every $\Phi_{f}$
is extreme and that, when the iterates of $f$ converge to a point
of the disk, the preadjoint iterates converge in trace norm to the
corresponding Szegő kernel state. 
\end{abstract}

\address{Mathematical Reviews, 535 W. William St, Suite 210, Ann Arbor, MI
48103, USA}
\email{james.ftian@gmail.com}
\keywords{Completely positive maps; Stinespring dilation; Kraus representation;
positive definite kernels; reproducing kernel Hilbert spaces; de Branges-Rovnyak
spaces; weighted composition operators; holomorphic dynamics.}

\maketitle
\tableofcontents{}

\section{Introduction}\label{sec:1}

Completely positive maps form a fundamental class of maps between
algebras of bounded operators. Two classical results describe their
structure. Stinespring's theorem realizes a completely positive map
as a compression of a $*$-representation on a larger Hilbert space,
while the Kraus representation expresses a normal completely positive
map on $\mathscr{B}\left(H\right)$ as a finite or countable sum of
operator conjugations \cite{MR69403,MR292434,MR1976867}. The usual
Stinespring construction begins with a space of formal tensors, equips
it with a positive semidefinite form, quotients by the null space,
and then completes. In Sections \ref{sec:2}--\ref{sec:3}, we give
a different realization using two scalar positive definite kernels.
The first kernel is 
\[
K\left(\left(S,a\right),\left(T,b\right)\right)=\left\langle a,\varphi\left(S^{*}T\right)b\right\rangle .
\]
Its reproducing kernel Hilbert space gives a concrete realization
of the minimal Stinespring space. Because this space is realized from
the beginning as a space of functions on $\mathscr{B}\left(H\right)\times H$,
a vector of zero norm vanishes pointwise. Thus the identification
normally made by quotienting is incorporated into the kernel realization
itself. The first kernel gives the dilation space and its representation,
but it does not yet display the tensor product form behind the Kraus
representation. For a normal completely positive map, a second kernel,
formed from the matrix units of $\mathscr{B}\left(H\right)$, separates
the multiplicity space from the original Hilbert space. The resulting
kernel identity gives a unitary realization of the minimal Stinespring
space as 
\[
H\otimes H_{\widehat{K}},
\]
with the representation acting as $S\otimes I_{H_{\widehat{K}}}$.
Expanding the dilation operator in an orthonormal basis of $H_{\widehat{K}}$
then gives the Kraus operators as its coordinate operators. The role
of the two kernels is therefore different. The first realizes the
Stinespring space directly as a reproducing kernel Hilbert space,
while the second extracts its multiplicity space and converts the
abstract dilation into the Kraus representation. The argument uses
only scalar positive definite kernels and Hilbert space geometry.
In \prettyref{sec:4}, we apply the construction to holomorphic self-maps
of the disk. The norm estimate and strong row identity for the associated
weighted composition operators were proved in \cite{MR2336583}, while
endomorphisms and Cuntz families associated with inner functions were
studied in \cite{MR4456098}. For each holomorphic self-map $f:\mathbb{D}\rightarrow\mathbb{D}$,
the factorization 
\[
k\left(z,w\right)=k\left(f\left(z\right),f\left(w\right)\right)c_{f}\left(z,w\right)
\]
gives a normal unital completely positive map 
\[
\Phi_{f}:\mathscr{B}\left(H^{2}\right)\longrightarrow\mathscr{B}\left(H^{2}\right)
\]
satisfying 
\[
\Phi_{f}\left(M_{h}\right)=M_{h\circ f}.
\]
The two-kernel construction identifies the de Branges-Rovnyak space
$\mathcal{H}\left(f\right)$ as the minimal Kraus multiplicity space
of $\Phi_{f}$ and the weighted composition operators as its coordinate
Kraus operators. This gives the full operator formula 
\[
\Phi_{f}\left(S\right)=\sum_{\alpha}W_{u_{\alpha},f}SW^{*}_{u_{\alpha},f}
\]
in terms of weighted composition operators, with the previously known
row identity as the special case $S=I$. We then study the structure
of the family $\left\{ \Phi_{f}\right\} $. Every $\Phi_{f}$ is an
extreme unital completely positive map, and composition of symbols
gives 
\[
\Phi_{f}\circ\Phi_{g}=\Phi_{g\circ f}.
\]
At the level of multiplicity spaces, this relation is implemented
by canonical isometries 
\[
\mathcal{H}\left(g\circ f\right)\longrightarrow\mathcal{H}\left(g\right)\otimes\mathcal{H}\left(f\right).
\]
To our knowledge, this extremality result and the resulting composition
structure for general Schur functions have not appeared previously.
Finally, \prettyref{sec:5} develops a dynamical consequence of the
composition formula. Whenever the iterates $f^{\circ n}$ converge
locally uniformly to a point $a\in\mathbb{D}$, the preadjoint iterates
of $\Phi_{f}$ converge in trace norm to the normalized Szegő kernel
state at $a$.

\section{The Stinespring Dilation}\label{sec:2}

We begin by building the dilation space from a scalar kernel defined
directly by the CP map. The kernel will be shown to be positive definite
(p.d.), and its reproducing kernel Hilbert space (RKHS) will serve
as the dilation space. 

Throughout this section, let $H$ be a complex Hilbert space and $\mathscr{B}\left(H\right)$
the space of bounded operators on $H$. All inner products are linear
in the second variable. 
\begin{thm}[Stinespring dilation]
\label{thm:1} Let $\varphi:\mathscr{B}\left(H\right)\rightarrow\mathscr{B}\left(H\right)$
be a CP map. There exists a minimal Stinespring dilation $\left(\pi,H_{K},V\right)$
such that 
\[
\varphi\left(S\right)=V^{*}\pi\left(S\right)V
\]
where: 
\begin{enumerate}
\item $H_{K}$ is the RKHS associated with the $\mathbb{C}$-valued kernel
\[
K\left(\left(S,a\right),\left(T,b\right)\right):=\left\langle a,\varphi\left(S^{*}T\right)b\right\rangle .
\]
\item $V:H\rightarrow H_{K}$ is the bounded linear operator, 
\[
Vb:=K_{\left(I,b\right)}=K\left(\cdot,\left(I,b\right)\right).
\]
\item $\pi:\mathscr{B}\left(H\right)\rightarrow\mathscr{B}\left(H_{K}\right)$
is the unital $*$-representation defined on the dense span of sections
by
\[
\pi\left(S\right)K_{\left(T,b\right)}:=K_{\left(ST,b\right)}.
\]
\end{enumerate}
\end{thm}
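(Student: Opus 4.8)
The plan is to extract each of the three ingredients from complete positivity by a short matrix-positivity argument, and then to read off the dilation identity and minimality directly from the reproducing property. The only genuinely delicate point will be establishing that $\pi(S)$ is bounded; everything else is bookkeeping with kernel sections.

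First I would check that $K$ is positive definite. Given finitely many points $(S_1,a_1),\dots,(S_n,a_n)$, the operator matrix $[S_i^{*}S_j]_{i,j}\in M_n(\mathscr{B}(H))$ is positive, since $\sum_{i,j}\langle\xi_i,S_i^{*}S_j\xi_j\rangle=\big\|\sum_i S_i\xi_i\big\|^2\ge 0$ for every $(\xi_i)\in H^n$. Because $\varphi$ is CP, its $n$-th amplification is positive, so $[\varphi(S_i^{*}S_j)]_{i,j}\ge 0$ in $M_n(\mathscr{B}(H))$; pairing this with $(a_1,\dots,a_n)\in H^n$ gives $\sum_{i,j}\langle a_i,\varphi(S_i^{*}S_j)a_j\rangle\ge 0$, which is exactly positive definiteness of $K$ (Hermitian symmetry is automatic, and also follows from the fact that a CP map is $*$-preserving). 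By the standard RKHS construction (Moore--Aronszajn) there is then a unique RKHS $H_K$ with reproducing kernel $K$; its sections $K_{(S,a)}$ span a dense subspace and satisfy $\langle K_{(S,a)},K_{(T,b)}\rangle_{H_K}=\langle a,\varphi(S^{*}T)b\rangle$. From this identity $K_{(S,a)}$ depends linearly on $a$ and on $S$, so $Vb:=K_{(I,b)}$ is linear, and $\|Vb\|^2=\langle b,\varphi(I)b\rangle\le\|\varphi(I)\|\,\|b\|^2$, so $V$ is bounded and $V^{*}$ exists.

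The heart of the argument is to show that $\pi(S)K_{(T,b)}:=K_{(ST,b)}$ extends to a bounded operator on $H_K$; a single norm estimate handles both well-definedness (independence of the chosen representation of a vector as a linear combination of sections) and boundedness. For $\xi=\sum_i K_{(T_i,b_i)}$ one computes $\|\pi(S)\xi\|^2=\sum_{i,j}\langle b_i,\varphi(T_i^{*}S^{*}ST_j)b_j\rangle$. Since $\|S\|^2 I-S^{*}S\ge 0$, write it as $R^2$ with $R=R^{*}\ge 0$; then
\[
\|S\|^2\,[T_i^{*}T_j]_{i,j}-[T_i^{*}S^{*}ST_j]_{i,j}=\big[(RT_i)^{*}(RT_j)\big]_{i,j}\ge 0
\]
in $M_n(\mathscr{B}(H))$, by the same Gram computation as before. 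Applying the $n$-th amplification of $\varphi$ and pairing with $(b_1,\dots,b_n)$ yields $\|\pi(S)\xi\|^2\le\|S\|^2\|\xi\|^2$. Hence $\pi(S)$ is well defined on the dense span of sections, has norm at most $\|S\|$, and extends uniquely to $\mathscr{B}(H_K)$.

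It then remains to verify the algebraic properties, each checked on sections and extended by density: multiplicativity from $\pi(S_1)\pi(S_2)K_{(T,b)}=K_{(S_1S_2T,b)}=\pi(S_1S_2)K_{(T,b)}$; unitality from $\pi(I)K_{(T,b)}=K_{(T,b)}$; linearity of $\pi$ from linearity of $K$ in its operator slot; and the $*$-property from $\langle\pi(S^{*})K_{(T,b)},K_{(T',b')}\rangle=\langle b,\varphi(T^{*}ST')b'\rangle=\langle K_{(T,b)},\pi(S)K_{(T',b')}\rangle$, giving $\pi(S)^{*}=\pi(S^{*})$. The dilation identity is then immediate: $\langle a,V^{*}\pi(S)Vb\rangle=\langle K_{(I,a)},K_{(S,b)}\rangle=\langle a,\varphi(S)b\rangle$ for all $a,b\in H$, so $\varphi(S)=V^{*}\pi(S)V$. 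Finally $\pi(S)Vb=K_{(S,b)}$, and since these sections span a dense subspace of $H_K$, the dilation $(\pi,H_K,V)$ is minimal. As flagged above, the boundedness estimate for $\pi(S)$ is the one step requiring genuine care; the rest is formal manipulation of kernel sections.
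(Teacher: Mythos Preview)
Your proposal is correct and follows essentially the same route as the paper: verify positive definiteness of $K$ via the positivity of $[\varphi(S_i^{*}S_j)]$, build $H_K$, and obtain the bound $\|\pi(S)\|\le\|S\|$ from the operator-matrix inequality $[T_i^{*}S^{*}ST_j]\le\|S\|^{2}[T_i^{*}T_j]$ together with the $n$-th amplification of $\varphi$, then read off the dilation identity and minimality from $\pi(S)Vb=K_{(S,b)}$. The only cosmetic differences are that you make the square-root/Gram argument for that matrix inequality explicit and separately record the bound $\|Vb\|^{2}=\langle b,\varphi(I)b\rangle$, both of which the paper leaves implicit.
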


\begin{proof}
Complete positivity of $\varphi$ ensures $K$ is a scalar-valued
positive definite (p.d.) kernel on $\left(\mathscr{B}\left(H\right)\times H\right)\times\left(\mathscr{B}\left(H\right)\times H\right)$.
Indeed, for any finite family $\left\{ \left(S_{i},a_{i}\right)\right\} ^{n}_{i=1}$
in $\mathscr{B}\left(H\right)\times H$, and $\left\{ c_{i}\right\} ^{n}_{i=1}$
in $\mathbb{C}$, setting $b_{i}=c_{i}a_{i}$, then 
\begin{align*}
\sum\nolimits_{i,j}\overline{c_{i}}c_{j}K\left(\left(S_{i},a_{i}\right),\left(S_{j},a_{j}\right)\right) & =\sum\nolimits_{i,j}\overline{c_{i}}c_{j}\left\langle a_{i},\varphi\left(S^{*}_{i}S_{j}\right)a_{j}\right\rangle \\
 & =\sum\nolimits_{i,j}\left\langle b_{i},\varphi\left(S^{*}_{i}S_{j}\right)b_{j}\right\rangle \\
 & =\left\langle \left(b_{1},\dots,b_{n}\right),\left[\varphi\left(S^{*}_{i}S_{j}\right)\right]\left(b_{1},\dots,b_{n}\right)\right\rangle \geq0
\end{align*}
because the block matrix $\left[\varphi\left(S^{*}_{i}S_{j}\right)\right]$
is positive in $M_{n}\left(\mathscr{B}\left(H\right)\right)$. 

Let $H_{K}$ be the corresponding RKHS with sections 
\[
K_{\left(T,b\right)}:=\left\langle \cdot,K\left(\cdot,T\right)b\right\rangle .
\]
By definition, 
\[
H_{K}=\overline{span}\left\{ K_{\left(T,b\right)}:T\in\mathscr{B}\left(H\right),\:b\in H\right\} .
\]
In particular, it consists of complex-valued functions on $\mathscr{B}\left(H\right)\times H$
with reproducing kernel $K$. See, e.g., \cite{MR51437,MR2938971,MR4250453}.

Define $V:H\rightarrow H_{K}$ by $Vb=K_{\left(I,b\right)}$, and
$\pi:\mathscr{B}\left(H\right)\rightarrow\mathscr{B}\left(H_{K}\right)$
by $\pi\left(S\right)K_{\left(T,b\right)}=K_{\left(ST,b\right)}$.
To see $\pi\left(S\right)$ is well defined and bounded, take a finite
sum $u=\sum_{k}c_{k}K_{\left(T_{k},b_{k}\right)}$. Then 
\begin{align*}
\left\Vert \pi\left(S\right)u\right\Vert ^{2} & =\Big\Vert\sum\nolimits_{k}c_{k}K_{\left(ST_{k},b_{k}\right)}\Big\Vert^{2}\\
 & =\sum\nolimits_{k,l}\overline{c_{k}}c_{l}K\left(\left(ST_{k},b_{k}\right),\left(ST_{l},b_{l}\right)\right)\\
 & =\sum\nolimits_{k,l}\overline{c_{k}}c_{l}\left\langle b_{k},\varphi\left(T^{*}_{k}S^{*}ST_{l}\right)b_{l}\right\rangle .
\end{align*}
Since $S^{*}S\leq\left\Vert S\right\Vert ^{2}I$ in $\mathscr{B}\left(H\right)$
and the map $A\mapsto T^{*}_{k}AT_{k}$ is positive, we have 
\[
\left[T^{*}_{k}S^{*}ST_{l}\right]\leq\left\Vert S\right\Vert ^{2}\left[T^{*}_{k}T_{l}\right]
\]
in $M_{n}\left(\mathscr{B}\left(H\right)\right)$. 

Applying the CP map $id_{M_{n}}\otimes\varphi$ and pairing with $\left(b_{k}\right)\in H^{\oplus n}$
yields 
\[
\sum\nolimits_{k,l}\overline{c_{k}}c_{l}\left\langle b_{k},\varphi\left(T^{*}_{k}S^{*}ST_{l}\right)b_{l}\right\rangle \leq\left\Vert S\right\Vert ^{2}\sum\nolimits_{k,l}\overline{c_{k}}c_{l}\left\langle b_{k},\varphi\left(T^{*}_{k}T_{l}\right)b_{l}\right\rangle =\left\Vert S\right\Vert ^{2}\left\Vert u\right\Vert ^{2}.
\]
Hence $\left\Vert \pi\left(S\right)\right\Vert \leq\left\Vert S\right\Vert $.
One checks that 
\[
\pi\left(S_{1}\right)\pi\left(S_{2}\right)=\pi\left(S_{1}S_{2}\right)
\]
on the dense set of sections. Since $K$ is Hermitian, $\pi\left(S\right)^{*}=\pi\left(S^{*}\right)$.
Hence $\pi$ is a unital $*$-representation of $\mathscr{B}\left(H\right)$. 

For all $S\in\mathscr{B}\left(H\right)$ and $a,b\in H$, 
\[
\left\langle a,\varphi\left(S\right)b\right\rangle =K\left(\left(I,a\right),\left(S,b\right)\right)=\left\langle Va,\pi\left(S\right)Vb\right\rangle _{H_{K}},
\]
that is, $\varphi\left(S\right)=V^{*}\pi\left(S\right)V$. 

Finally, the dilation is minimal by construction, as the spanning
set for $H_{K}$ is precisely $\left\{ K_{(S,a)}\right\} =\left\{ \pi\left(S\right)Va\right\} $.
\end{proof}

\begin{rem}
\label{rem:2-2}We work from the outset with the linear span of the
kernel sections as functions on $\mathscr{B}\left(H\right)\times H$.
If $u$ in this span has zero norm, then 
\[
\left|u\left(S,a\right)\right|=\left|\left\langle K_{\left(S,a\right)},u\right\rangle _{H_{K}}\right|\leq\left\Vert K_{\left(S,a\right)}\right\Vert _{H_{K}}\left\Vert u\right\Vert _{H_{K}}=0
\]
for every $\left(S,a\right)\in\mathscr{B}\left(H\right)\times H$.
Thus $u$ is the zero function. 

In the usual GNS-Stinespring construction, one first quotients by
the null space of the underlying sesquilinear form and then completes.
Here the realization by kernel functions incorporates that identification
automatically, so the quotient never appears as a separate step.
\end{rem}

\section{The Kraus Representation}\label{sec:3}

The dilation obtained in the previous section is still abstract. It
lives inside the RKHS generated by the first kernel. Our next goal
is to uncover the concrete tensor product structure hidden inside
that space. This structure is responsible for the familiar form of
a completely positive map as a sum of operator conjugations.

To reveal it, we introduce a second scalar positive definite kernel,
built from the matrix units of the underlying Hilbert space. This
auxiliary kernel generates a new reproducing kernel Hilbert space
which plays the role of the “ancillary’’ tensor factor.

A simple computation then shows that the original dilation space $H_{K}$
is unitarily equivalent to an honest tensor product of Hilbert spaces.
Once this identification is made, the Kraus representation follows
immediately by expanding the dilation operator along an orthonormal
basis of the auxiliary space.
\begin{thm}[canonical tensor structure]
\label{thm:3-1} Let $\varphi:\mathscr{B}\left(H\right)\rightarrow\mathscr{B}\left(H\right)$
be a normal CP map, and let $\left(\pi,H_{K},V\right)$ be the minimal
Stinespring dilation from \prettyref{thm:1}. Fix an orthonormal basis
(ONB) $\left\{ e_{i}\right\} $ in $H$, and let $E_{ij}=\left|e_{i}\left\rangle \right\langle e_{j}\right|$.
Let $H_{\hat{K}}$ be the RKHS of the kernel 
\[
\hat{K}\left(\left(i,a\right),\left(j,b\right)\right):=\left\langle a,\varphi\left(E_{ij}\right)b\right\rangle .
\]
Then the Stinespring space $H_{K}$ is canonically unitary to $H\otimes H_{\hat{K}}$
via an isomorphism $U$ that identifies:
\begin{enumerate}
\item The representation $\pi\left(S\right)$ with $S\otimes I_{H_{\hat{K}}}$. 
\item The operator $V$ with $W:H\rightarrow H\otimes H_{\hat{K}}$, where
\[
Wa:=\sum\nolimits_{i}e_{i}\otimes\hat{K}_{\left(i,a\right)}.
\]
\end{enumerate}
This identification transforms the abstract Stinespring form $\varphi(S)=V^{*}\pi(S)V$
into the canonical tensor product form 
\[
\varphi(S)=W^{*}\left(S\otimes I_{H_{\hat{K}}}\right)W.
\]
Furthermore, expanding $W$ in an ONB $\{f_{\alpha}\}$ of $H_{\hat{K}}$
yields the Kraus form
\[
\varphi\left(S\right)=\sum V^{*}_{\alpha}SV_{\alpha},
\]
where 
\[
V_{\alpha}:=\left(I_{H}\otimes\left\langle f_{\alpha},\cdot\right\rangle \right)W.
\]

\end{thm}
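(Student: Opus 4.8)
\emph{Proof sketch.} The plan is to construct the unitary $U$ by hand on the natural spanning sets of $H_{K}$ and $H\otimes H_{\hat K}$, check that it preserves inner products, and then read off (1), (2) and the Kraus form by transport of structure. The first observation is that $\hat K$ is positive definite: since $E_{ij}=E_{1i}^{*}E_{1j}$ (fixed index $1$), one has $\hat K((i,a),(j,b))=\langle a,\varphi(E_{1i}^{*}E_{1j})b\rangle=K((E_{1i},a),(E_{1j},b))$, so $\hat K$ inherits positive-definiteness and Hermiticity directly from the kernel $K$ of \prettyref{thm:1}. Thus $H_{\hat K}$ is a bona fide RKHS with reproducing identity $\langle\hat K_{(i,a)},\hat K_{(j,b)}\rangle_{H_{\hat K}}=\langle a,\varphi(E_{ij})b\rangle$ --- concretely it is the RKHS of the restriction of $K$ to the ``column'' $\{(E_{1i},a)\}$.

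Next, define $U$ on the matrix-unit sections by $UK_{(E_{ij},a)}:=e_i\otimes\hat K_{(j,a)}$, extended linearly. The key point is that $U$ is isometric, and this is just a comparison of Gram matrices: using $E_{ij}^{*}E_{kl}=\delta_{ik}E_{jl}$,
\[
\langle K_{(E_{ij},a)},K_{(E_{kl},b)}\rangle_{H_K}=\langle a,\varphi(E_{ij}^{*}E_{kl})b\rangle=\delta_{ik}\langle a,\varphi(E_{jl})b\rangle=\langle e_i\otimes\hat K_{(j,a)},\,e_k\otimes\hat K_{(l,b)}\rangle_{H\otimes H_{\hat K}}.
\]
Hence $U$ extends to an isometry of $\overline{\mathrm{span}}\{K_{(E_{ij},a)}\}$ onto $\overline{\mathrm{span}}\{e_i\otimes\hat K_{(j,a)}\}$; the latter is all of $H\otimes H_{\hat K}$ because $\{e_i\}$ is total in $H$ and the sections $\{\hat K_{(j,a)}\}$ are total in $H_{\hat K}$. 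The step I expect to be the main obstacle is the remaining density statement, $\overline{\mathrm{span}}\{K_{(E_{ij},a)}\}=H_{K}$. When $\dim H<\infty$ it is immediate, since $S=\sum_{i,j}\langle e_i,Se_j\rangle E_{ij}$ is a finite sum and $T\mapsto K_{(T,b)}$ is linear, so $K_{(S,b)}=\sum_{i,j}\langle e_i,Se_j\rangle K_{(E_{ij},b)}$. In general one approximates $S$ by its finite-rank truncations $P_{n}SP_{n}$ ($P_n$ the projection onto $\mathrm{span}\{e_1,\dots,e_n\}$) and shows $K_{(P_nSP_n,b)}\to K_{(S,b)}$ in $H_{K}$; this is the point that uses normality of $\varphi$ along the matrix units (equivalently $K_{(I,b)}=\sum_iK_{(E_{ii},b)}$ --- the partial sums $K_{(P_n,b)}$ are automatically Cauchy because $\langle b,\varphi(P_n)b\rangle$ increases and is dominated by $\langle b,\varphi(I)b\rangle$, and the content is that the limit is $K_{(I,b)}$). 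Granting this, $U:H_{K}\to H\otimes H_{\hat K}$ is unitary.

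It remains to transport structure. For (1): $\pi(S)K_{(E_{ij},a)}=K_{(SE_{ij},a)}$, and $SE_{ij}=|Se_i\rangle\langle e_j|$ is the operator-norm limit of $\sum_{k\le n}\langle e_k,Se_i\rangle E_{kj}$, so norm-continuity of $\varphi$ gives $K_{(SE_{ij},a)}=\sum_k\langle e_k,Se_i\rangle K_{(E_{kj},a)}$ in $H_K$, whence $U\pi(S)K_{(E_{ij},a)}=(Se_i)\otimes\hat K_{(j,a)}=(S\otimes I_{H_{\hat K}})UK_{(E_{ij},a)}$ and therefore $U\pi(S)=(S\otimes I_{H_{\hat K}})U$. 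For (2): $UVa=UK_{(I,a)}=\sum_iUK_{(E_{ii},a)}=\sum_ie_i\otimes\hat K_{(i,a)}=Wa$. Combining, $\varphi(S)=V^{*}\pi(S)V=(UV)^{*}\bigl(U\pi(S)U^{*}\bigr)(UV)=W^{*}(S\otimes I_{H_{\hat K}})W$. Finally, for an ONB $\{f_\alpha\}$ of $H_{\hat K}$ the map $\xi\mapsto\bigl((I_H\otimes\langle f_\alpha,\cdot\rangle)\xi\bigr)_\alpha$ is a unitary $H\otimes H_{\hat K}\to\bigoplus_\alpha H$ sending $S\otimes I_{H_{\hat K}}$ to $\bigoplus_\alpha S$ and $W$ to $(V_\alpha)_\alpha$ with $V_\alpha=(I_H\otimes\langle f_\alpha,\cdot\rangle)W$; hence $\langle a,\varphi(S)b\rangle=\langle Wa,(S\otimes I_{H_{\hat K}})Wb\rangle=\sum_\alpha\langle V_\alpha a,SV_\alpha b\rangle$, i.e. $\varphi(S)=\sum_\alpha V_\alpha^{*}SV_\alpha$ (strongly convergent sum).
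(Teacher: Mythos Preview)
Your argument is correct and rests on the same idea as the paper's, but you organize it in a way that creates an avoidable detour. You define $U$ only on the matrix-unit sections $K_{(E_{ij},a)}$ and must then argue separately that these are dense in $H_{K}$---the step you flag as the ``main obstacle'' and which, as you note, needs a normality-type hypothesis on $\varphi$ in infinite dimensions. The paper instead defines $U$ on \emph{every} section at once by $U(K_{(T,b)}):=\sum_{i}Te_{i}\otimes\hat K_{(i,b)}$ and checks isometry via the single kernel identity
\[
K\bigl((S,a),(T,b)\bigr)=\sum_{i,j}\langle Se_{i},Te_{j}\rangle\,\langle\hat K_{(i,a)},\hat K_{(j,b)}\rangle_{H_{\hat K}};
\]
since the $K_{(T,b)}$ span $H_{K}$ by construction, no separate density argument is needed, and the intertwining $U\pi(S)U^{*}=S\otimes I$ falls out in one line from $U(K_{(ST,b)})=\sum_{i}STe_{i}\otimes\hat K_{(i,b)}$. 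Your formula agrees with the paper's on matrix units (take $T=E_{ij}$, so $\sum_{k}E_{ij}e_{k}\otimes\hat K_{(k,a)}=e_{i}\otimes\hat K_{(j,a)}$), hence the two $U$'s coincide; and the paper's kernel identity hides exactly the same continuity issue you isolate, so you are being more careful rather than less. Once $U$ is in hand, your derivations of $W=UV$, of $\varphi(S)=W^{*}(S\otimes I)W$, and of the Kraus decomposition match the paper's verbatim.
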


\begin{proof}
The proof proceeds in two steps. First, we establish the unitary isomorphism
$U$. Second, we transport the representation $(\pi,V)$ along $U$
to derive the tensor product and Kraus forms. See the diagram below.
\[
\xymatrix{H\ar[d]_{\varphi\left(S\right)}\ar[rr]^{V}\ar@/^{2pc}/[rrrr]^{W} &  & H_{K}\ar[rr]^{U}\ar[d]_{\pi\left(S\right)} &  & H\otimes H_{\hat{K}}\ar[d]^{S\otimes I_{H_{\hat{K}}}}\\
H\ar[rr]_{V}\ar@/_{2pc}/[rrrr]_{W} &  & H_{K}\ar[rr]_{U} &  & H\otimes H_{\hat{K}}
}
\]

\textbf{Step 1.} Fix an ONB $\left\{ e_{i}\right\} _{i\in I}$ in
$H$, and let $E_{ij}=\left|e_{i}\left\rangle \right\langle e_{j}\right|$.
The block matrix $[\varphi(E_{ij})]$ is positive by complete positivity,
so $\hat{K}$ is a scalar-valued p.d. kernel defined on $\left(I\times H\right)\times\left(I\times H\right)$.
Let $H_{\hat{K}}$ be the associated RKHS with sections $\hat{K}_{\left(i,a\right)}$,
so that 
\[
H_{\hat{K}}=\overline{span}\mathop{\{}\hat{K}_{\left(i,a\right)}:i\in I,\:a\in H\mathop{\}}.
\]

We now relate the kernel $K$ from \prettyref{thm:1} to this new
kernel $\hat{K}$. (When $H$ is infinite dimensional, the sums below
are understood as limits over finite subsets of the fixed orthonormal
basis. Their norm convergence and the passage through $\varphi$ follow
from the normality of $\varphi$.) For any $\left(S,a\right)$ and
$\left(T,b\right)$, 
\begin{align*}
K\left(\left(S,a\right),\left(T,b\right)\right) & =\left\langle a,\varphi\left(S^{*}T\right)b\right\rangle \\
 & =\sum\nolimits_{i,j}\left\langle e_{i},S^{*}Te_{j}\right\rangle \left\langle a,\varphi\left(E_{ij}\right)b\right\rangle \\
 & =\sum\nolimits_{i,j}\left\langle Se_{i},Te_{j}\right\rangle \left\langle \hat{K}_{\left(i,a\right)},\hat{K}_{\left(j,b\right)}\right\rangle _{H_{\hat{K}}}\\
 & =\left\langle \sum\nolimits_{i}Se_{i}\otimes\hat{K}_{\left(i,a\right)},\sum\nolimits_{j}Te_{j}\otimes\hat{K}_{\left(j,b\right)}\right\rangle _{H\otimes H_{\hat{K}}}.
\end{align*}
This calculation shows that the linear map $U:H_{K}\rightarrow H\otimes H_{\hat{K}}$,
defined on the spanning set of sections by
\[
U\left(K_{\left(T,b\right)}\right):=\sum\nolimits_{i}Te_{i}\otimes\hat{K}_{\left(i,b\right)}
\]
is isometric, with dense range in $H\otimes H_{\hat{K}}$. Thus, $U$
is unitary, and 
\[
H_{K}\simeq H\otimes H_{\hat{K}}.
\]

\textbf{Step 2.} We now transport the Stinespring representation $(\pi,V)$
from $H_{K}$ to $H\otimes H_{\hat{K}}$. 

Using $U$, we get 
\begin{align*}
U\pi\left(S\right)U^{*}\left(\sum\nolimits_{i}Te_{i}\otimes\hat{K}_{\left(i,b\right)}\right) & =U\left(\pi\left(S\right)K_{\left(T,b\right)}\right)\\
 & =U\left(K_{\left(ST,b\right)}\right)\\
 & =\sum\nolimits_{i}STe_{i}\otimes\hat{K}_{\left(i,b\right)}\\
 & =\left(S\otimes I\right)\left(\sum\nolimits_{i}Te_{i}\otimes\hat{K}_{\left(i,b\right)}\right).
\end{align*}
So $\pi\left(S\right)$ is unitarily equivalent to $S\otimes I_{H_{\hat{K}}}$. 

Also, the operator $V$ becomes $W:=UV$:
\[
Wa:=U\left(Va\right)=U\left(K_{\left(I,a\right)}\right)=\sum\nolimits_{i}e_{i}\otimes\hat{K}_{\left(i,a\right)}.
\]
Therefore, 
\begin{align*}
\varphi\left(S\right) & =V^{*}\pi\left(S\right)V=\left(UV\right)^{*}\left(U\pi\left(S\right)U^{*}\right)\left(UV\right)\\
 & =W^{*}\left(S\otimes I\right)W.
\end{align*}

Lastly, let $\left\{ f_{\alpha}\right\} _{\alpha\in\Lambda}$ be an
ONB of $H_{\hat{K}}$. Define the operators $V_{\alpha}:H\rightarrow H$
by 
\[
V_{\alpha}:=\left(I_{H}\otimes\left\langle f_{\alpha},\cdot\right\rangle _{H_{\hat{K}}}\right)W.
\]
This means $V_{\alpha}a$ is the vector component of $Wa$ along $f_{\alpha}$,
so 
\[
Wa=\sum\nolimits_{\alpha}\left(V_{\alpha}a\right)\otimes f_{\alpha}.
\]
This can be written as $W=\sum_{\alpha}V_{\alpha}\otimes f_{\alpha}$.
Substituting this into the tensor form gives:
\begin{align*}
\varphi\left(S\right) & =W^{*}\left(S\otimes I\right)W\\
 & =\left(\sum\nolimits_{\alpha}V^{*}_{\alpha}\otimes f_{\alpha}\right)\left(S\otimes I\right)\left(\sum\nolimits_{\beta}V_{\beta}\otimes f_{\beta}\right)\\
 & =\sum\nolimits_{\alpha,\beta}V^{*}_{\alpha}SV_{\beta}\left\langle f_{\alpha},f_{\beta}\right\rangle \\
 & =\sum\nolimits_{\alpha}V^{*}_{\alpha}SV_{\alpha}.
\end{align*}
The convergence is in the strong operator topology.
\end{proof}

\section{Schur functions and de Branges-Rovnyak spaces}\label{sec:4}

We now consider a setting in which the two kernels carry analytic
structure. Let $H^{2}$ be the Hardy space on the unit disk $\mathbb{D}$,
with Szegő kernel 
\[
k\left(z,w\right)=\frac{1}{1-z\overline{w}}.
\]
We write $k_{w}=k\left(\cdot,w\right)$, so that 
\[
h\left(w\right)=\left\langle k_{w},h\right\rangle _{H^{2}}
\]
for $h\in H^{2}$.

Let $\mathcal{S}\left(\mathbb{D}\right)$ denote the holomorphic self-maps
of $\mathbb{D}$. Fix $f\in\mathcal{S}\left(\mathbb{D}\right)$. Since
$f\in H^{\infty}$ and $\left\Vert f\right\Vert _{\infty}\leq1$,
multiplication by $f$ defines a contraction $M_{f}$ on $H^{2}$.
Its de Branges-Rovnyak kernel is 
\[
c_{f}\left(z,w\right)=\frac{1-f\left(z\right)\overline{f\left(w\right)}}{1-z\overline{w}}.
\]
This is a positive definite kernel. Indeed, if $M_{f}$ denotes multiplication
by $f$ on $H^{2}$, then 
\[
c_{f}\left(z,w\right)=\left\langle k_{z},\left(I-M_{f}M^{*}_{f}\right)k_{w}\right\rangle _{H^{2}}.
\]

We denote its RKHS by $\mathcal{H}\left(f\right)$ and write 
\[
c_{f,w}=c_{f}\left(\cdot,w\right).
\]
The two kernels are related by 
\begin{equation}
k\left(z,w\right)=k\left(f\left(z\right),f\left(w\right)\right)c_{f}\left(z,w\right).\label{eq:4-1}
\end{equation}

We refer to \cite{MR1289670} for the basic theory of these spaces.

\subsection{The associated completely positive map}

The identity \prettyref{eq:4-1} gives a canonical way to turn composition
on kernel vectors into a normal unital completely positive map on
$B\left(H^{2}\right)$.
\begin{thm}
\label{thm:4-1} The linear map defined on the kernel functions of
$H^{2}$ by 
\[
V_{f}k_{w}=k_{f\left(w\right)}\otimes c_{f,w}
\]
extends to an isometry 
\[
V_{f}:H^{2}\longrightarrow H^{2}\otimes\mathcal{H}\left(f\right).
\]
The formula 
\begin{equation}
\Phi_{f}\left(S\right)=V^{*}_{f}\left(S\otimes I_{\mathcal{H}\left(f\right)}\right)V_{f},\qquad S\in B\left(H^{2}\right),\label{eq:4-2}
\end{equation}
defines a normal unital completely positive map on $B\left(H^{2}\right)$.
The Stinespring representation in \prettyref{eq:4-2} is minimal.

Moreover, for every $h\in H^{\infty}$, 
\begin{equation}
\Phi_{f}\left(M_{h}\right)=M_{h\circ f}.\label{eq:4-3}
\end{equation}
If $\Phi_{f,*}$ denotes the preadjoint of $\Phi_{f}$ and 
\[
\rho_{w}=\left|\widetilde{k}_{w}\right\rangle \left\langle \widetilde{k}_{w}\right|,\qquad\widetilde{k}_{w}=\frac{k_{w}}{\left\Vert k_{w}\right\Vert },
\]
then 
\begin{equation}
\Phi_{f,*}\left(\rho_{w}\right)=\rho_{f\left(w\right)}.\label{eq:4-4}
\end{equation}
\end{thm}

\begin{proof}
For $z,w\in\mathbb{D}$, \prettyref{eq:4-1} gives 
\[
\begin{aligned}\left\langle V_{f}k_{z},V_{f}k_{w}\right\rangle  & =\left\langle k_{f\left(z\right)},k_{f\left(w\right)}\right\rangle _{H^{2}}\left\langle c_{f,z},c_{f,w}\right\rangle _{\mathcal{H}\left(f\right)}\\
 & =k\left(f\left(z\right),f\left(w\right)\right)c_{f}\left(z,w\right)\\
 & =k\left(z,w\right).
\end{aligned}
\]
Since the kernel functions span a dense subspace of $H^{2}$, the
map $V_{f}$ extends to an isometry. It follows that \prettyref{eq:4-2}
defines a normal unital completely positive map.

Let $\mathcal{L}$ be the closed linear span of 
\[
\left\{ \left(S\otimes I_{\mathcal{H}\left(f\right)}\right)V_{f}h:S\in B\left(H^{2}\right),\ h\in H^{2}\right\} .
\]
For fixed $w\in\mathbb{D}$ and $x\in H^{2}$, choose $S\in B\left(H^{2}\right)$
such that 
\[
Sk_{f\left(w\right)}=x.
\]
Then 
\[
x\otimes c_{f,w}=\left(S\otimes I_{\mathcal{H}\left(f\right)}\right)V_{f}k_{w}\in\mathcal{L}.
\]
The vectors $c_{f,w}$ span a dense subspace of $\mathcal{H}\left(f\right)$,
and hence 
\[
\mathcal{L}=H^{2}\otimes\mathcal{H}\left(f\right).
\]
The Stinespring representation is therefore minimal.

For $h\in H^{\infty}$, 
\[
\begin{aligned}\left(M^{*}_{h}\otimes I_{\mathcal{H}\left(f\right)}\right)V_{f}k_{w} & =\overline{h\left(f\left(w\right)\right)}k_{f\left(w\right)}\otimes c_{f,w}\\
 & =V_{f}M^{*}_{h\circ f}k_{w}.
\end{aligned}
\]
Thus 
\[
\left(M^{*}_{h}\otimes I_{\mathcal{H}\left(f\right)}\right)V_{f}=V_{f}M^{*}_{h\circ f},
\]
which gives \prettyref{eq:4-3}.

Finally, \prettyref{eq:4-1} gives 
\[
\left\Vert k_{w}\right\Vert ^{2}=\left\Vert k_{f\left(w\right)}\right\Vert ^{2}\left\Vert c_{f,w}\right\Vert ^{2}.
\]
Hence 
\[
V_{f}\widetilde{k}_{w}=\widetilde{k}_{f\left(w\right)}\otimes\frac{c_{f,w}}{\left\Vert c_{f,w}\right\Vert }.
\]
For $S\in B\left(H^{2}\right)$, it follows that 
\[
\left\langle \widetilde{k}_{w},\Phi_{f}\left(S\right)\widetilde{k}_{w}\right\rangle =\left\langle \widetilde{k}_{f\left(w\right)},S\widetilde{k}_{f\left(w\right)}\right\rangle .
\]
This proves \prettyref{eq:4-4}. 
\end{proof}

The action \prettyref{eq:4-4} on the normalized Szegő kernel states
actually characterizes the map among normal unital completely positive
maps. We show this below. 
\begin{prop}
\label{prop:4-2} Let $f:\mathbb{D}\to\mathbb{D}$ be holomorphic,
and let 
\[
\Psi:B\left(H^{2}\right)\longrightarrow B\left(H^{2}\right)
\]
be a normal unital completely positive map. If 
\[
\Psi_{*}\left(\rho_{w}\right)=\rho_{f\left(w\right)}
\]
for every $w\in\mathbb{D}$, then 
\[
\Psi=\Phi_{f}.
\]
\end{prop}

\begin{proof}
By \prettyref{thm:3-1}, there are a Hilbert space $\mathcal{E}$
and an isometry 
\[
W:H^{2}\longrightarrow H^{2}\otimes\mathcal{E}
\]
such that 
\[
\Psi\left(S\right)=W^{*}\left(S\otimes I_{\mathcal{E}}\right)W
\]
for every $S\in B\left(H^{2}\right)$.

Fix $w\in\mathbb{D}$. The assumption implies that the restriction
of the vector state associated with $W\widetilde{k}_{w}$ to the first
tensor factor is the pure state $\rho_{f\left(w\right)}$. It follows
that there is a unit vector $\eta_{w}\in\mathcal{E}$ such that 
\[
W\widetilde{k}_{w}=\widetilde{k}_{f\left(w\right)}\otimes\eta_{w}.
\]
Thus there is a vector $\xi_{w}\in\mathcal{E}$ such that 
\[
Wk_{w}=k_{f\left(w\right)}\otimes\xi_{w}.
\]

Since $W$ is an isometry, for $z,w\in\mathbb{D}$ we have 
\[
\begin{aligned}k\left(z,w\right) & =\left\langle Wk_{z},Wk_{w}\right\rangle \\
 & =k\left(f\left(z\right),f\left(w\right)\right)\left\langle \xi_{z},\xi_{w}\right\rangle .
\end{aligned}
\]
The Szegő kernel does not vanish on $\mathbb{D}\times\mathbb{D}$,
so \prettyref{eq:4-1} gives 
\[
\left\langle \xi_{z},\xi_{w}\right\rangle =c_{f}\left(z,w\right).
\]

It follows that, for every $S\in B\left(H^{2}\right)$, 
\[
\begin{aligned}\left\langle k_{z},\Psi\left(S\right)k_{w}\right\rangle  & =\left\langle Wk_{z},\left(S\otimes I_{\mathcal{E}}\right)Wk_{w}\right\rangle \\
 & =\left\langle k_{f\left(z\right)},Sk_{f\left(w\right)}\right\rangle c_{f}\left(z,w\right)\\
 & =\left\langle k_{z},\Phi_{f}\left(S\right)k_{w}\right\rangle .
\end{aligned}
\]
Since the kernel functions span a dense subspace of $H^{2}$, we have
\[
\Psi\left(S\right)=\Phi_{f}\left(S\right)
\]
for every $S\in B\left(H^{2}\right)$. 
\end{proof}

\subsection{Kraus structure and composition}

The construction above gives a Stinespring representation directly
from the factorization of the Szegő kernel. We now apply the two-kernel
method to this map, and show that the de Branges-Rovnyak space is
the multiplicity space selected by the second kernel. 

Let 
\[
K_{f}\left(\left(S,a\right),\left(T,b\right)\right)=\left\langle a,\Phi_{f}\left(S^{*}T\right)b\right\rangle ,
\]
and let $\mathcal{H}_{K_{f}}$ be its RKHS. Write 
\[
e_{n}\left(z\right)=z^{n},\qquad n\geq0,
\]
and define 
\[
E_{mn}h=e_{m}\left\langle e_{n},h\right\rangle _{H^{2}}.
\]
The second kernel associated with $\Phi_{f}$ is 
\[
\widehat{K}_{f}\left(\left(m,a\right),\left(n,b\right)\right)=\left\langle a,\Phi_{f}\left(E_{mn}\right)b\right\rangle .
\]
We denote its RKHS by $\mathcal{H}_{\widehat{K}_{f}}$.
\begin{prop}
\label{prop:4-3} The map defined on kernel sections by 
\[
\mathcal{U}_{f}K_{f,\left(S,a\right)}=\left(S\otimes I_{\mathcal{H}\left(f\right)}\right)V_{f}a
\]
extends to a unitary 
\[
\mathcal{U}_{f}:\mathcal{H}_{K_{f}}\longrightarrow H^{2}\otimes\mathcal{H}\left(f\right).
\]

For $n\geq0$ and $a\in H^{2}$, put 
\[
\gamma_{n,a}=\left(\left\langle e_{n},\cdot\right\rangle _{H^{2}}\otimes I_{\mathcal{H}\left(f\right)}\right)V_{f}a.
\]
The map 
\[
\widehat{\mathcal{U}}_{f}\widehat{K}_{f,\left(n,a\right)}=\gamma_{n,a}
\]
extends to a unitary 
\[
\widehat{\mathcal{U}}_{f}:\mathcal{H}_{\widehat{K}_{f}}\longrightarrow\mathcal{H}\left(f\right).
\]
In particular, 
\begin{equation}
\gamma_{n,k_{w}}=\overline{f\left(w\right)}^{n}c_{f,w}.\label{eq:4-5}
\end{equation}
\end{prop}

\begin{proof}
For $S,T\in B\left(H^{2}\right)$ and $a,b\in H^{2}$, 
\[
\left\langle \left(S\otimes I\right)V_{f}a,\left(T\otimes I\right)V_{f}b\right\rangle =\left\langle a,V^{*}_{f}\left(S^{*}T\otimes I\right)V_{f}b\right\rangle =K_{f}\left(\left(S,a\right),\left(T,b\right)\right).
\]
Thus $\mathcal{U}_{f}$ is isometric. Its range is dense by the minimality
in \prettyref{thm:4-1}, so it is unitary.

The definition of $\gamma_{n,a}$ gives 
\[
V_{f}a=\sum_{n\geq0}e_{n}\otimes\gamma_{n,a}.
\]
Therefore 
\[
\begin{aligned}\left\langle \gamma_{m,a},\gamma_{n,b}\right\rangle _{\mathcal{H}\left(f\right)} & =\left\langle V_{f}a,\left(E_{mn}\otimes I\right)V_{f}b\right\rangle \\
 & =\left\langle a,\Phi_{f}\left(E_{mn}\right)b\right\rangle \\
 & =\widehat{K}_{f}\left(\left(m,a\right),\left(n,b\right)\right).
\end{aligned}
\]
Hence $\widehat{\mathcal{U}}_{f}$ is isometric.

For $w\in\mathbb{D}$, 
\[
V_{f}k_{w}=k_{f\left(w\right)}\otimes c_{f,w}=\sum_{n\geq0}\overline{f\left(w\right)}^{n}e_{n}\otimes c_{f,w}.
\]
This proves \prettyref{eq:4-5}. In particular, 
\[
\gamma_{0,k_{w}}=c_{f,w}.
\]
Since the vectors $c_{f,w}$ span $\mathcal{H}\left(f\right)$, the
range of $\widehat{\mathcal{U}}_{f}$ is dense. It is therefore unitary. 
\end{proof}

Thus the first kernel recovers the full minimal Stinespring space
$H^{2}\otimes\mathcal{H}\left(f\right)$, and the second kernel selects
$\mathcal{H}\left(f\right)$ as its Kraus multiplicity space. In particular,
the Kraus operators obtained by taking coordinates in the second factor
are the weighted composition operators considered below.

For $u\in\mathcal{H}\left(f\right)$, consider the weighted composition
operator 
\[
W_{u,f}h=u\left(h\circ f\right).
\]

The norm estimate \prettyref{eq:4-6-a} and the identity \prettyref{eq:4-7}
were proved in \cite{MR2336583}. Here they arise from the minimal
Stinespring representation in \prettyref{thm:4-1}. More importantly,
this representation identifies $\mathcal{H}\left(f\right)$ as the
Kraus multiplicity space of $\Phi_{f}$ and the weighted composition
operators as its coordinate Kraus operators. So it gives the full
formula \prettyref{eq:4-6} for every $S\in B\left(H^{2}\right)$,
with \prettyref{eq:4-7} as the special case $S=I$.
\begin{prop}
\label{prop:4-4} For every $u\in\mathcal{H}\left(f\right)$, the
operator $W_{u,f}$ is bounded on $H^{2}$ and 
\begin{equation}
\left\Vert W_{u,f}\right\Vert \leq\left\Vert u\right\Vert _{\mathcal{H}\left(f\right)}.\label{eq:4-6-a}
\end{equation}
If $\left\{ u_{\alpha}\right\} _{\alpha\geq1}$ is an orthonormal
basis of $\mathcal{H}\left(f\right)$, then 
\begin{equation}
\Phi_{f}\left(S\right)=\sum_{\alpha\geq1}W_{u_{\alpha},f}SW^{*}_{u_{\alpha},f},\qquad S\in B\left(H^{2}\right),\label{eq:4-6}
\end{equation}
and 
\begin{equation}
\sum_{\alpha\geq1}W_{u_{\alpha},f}W^{*}_{u_{\alpha},f}=I.\label{eq:4-7}
\end{equation}
Both sums converge in the strong operator topology. 
\end{prop}

\begin{proof}
Define 
\[
A_{u}=\left(I_{H^{2}}\otimes\left\langle u,\cdot\right\rangle _{\mathcal{H}\left(f\right)}\right)V_{f}.
\]
Then 
\begin{equation}
\left\Vert A_{u}\right\Vert \leq\left\Vert u\right\Vert _{\mathcal{H}\left(f\right)}.\label{eq:nA}
\end{equation}
For $w\in\mathbb{D}$, 
\[
A_{u}k_{w}=\left\langle u,c_{f,w}\right\rangle k_{f\left(w\right)}=\overline{u\left(w\right)}k_{f\left(w\right)}.
\]
It follows that 
\[
A^{*}_{u}h=u\left(h\circ f\right)
\]
for $h\in H^{2}$. Thus $W_{u,f}=A^{*}_{u}$ and the norm estimate
\prettyref{eq:nA} follows.

Let $P_{N}$ be the projection onto the span of $u_{1},\ldots,u_{N}$.
Expanding the second tensor factor gives 
\[
\sum^{N}_{\alpha=1}W_{u_{\alpha},f}SW^{*}_{u_{\alpha},f}=V^{*}_{f}\left(S\otimes P_{N}\right)V_{f}.
\]
Since $P_{N}$ converges strongly to $I_{\mathcal{H}\left(f\right)}$,
the right hand side converges strongly to $\Phi_{f}\left(S\right)$.
This proves \prettyref{eq:4-6}. Taking $S=I$ gives \prettyref{eq:4-7}. 
\end{proof}

For a normal completely positive map on $B\left(H^{2}\right)$, its
Kraus rank means the Hilbert space dimension of the multiplicity space
in a minimal normal Stinespring representation.
\begin{cor}
\label{cor:4-5} The Kraus rank of $\Phi_{f}$ is 
\[
\dim\mathcal{H}\left(f\right).
\]
\end{cor}

\begin{proof}
The Stinespring representation in \prettyref{thm:4-1} is minimal,
and its multiplicity space is $\mathcal{H}\left(f\right)$. 
\end{proof}

The two kernels also give a direct extremality argument.
\begin{thm}
\label{thm:4-6} For every holomorphic map $f:\mathbb{D}\to\mathbb{D}$,
the map $\Phi_{f}$ is an extreme point of the convex set of unital
completely positive maps from $B\left(H^{2}\right)$ to $B\left(H^{2}\right)$.
Specifically, 
\begin{equation}
\left\{ \Phi_{f}:f\in\mathcal{S}\left(\mathbb{D}\right)\right\} \subsetneq\text{Ext}\left(\text{UCP}\left(\mathscr{B}\left(H^{2}\right)\right)\right).\label{eq:ext1}
\end{equation}
\end{thm}

\begin{proof}
The commutant of the minimal Stinespring representation is 
\[
\left\{ I_{H^{2}}\otimes T:T\in B\left(\mathcal{H}\left(f\right)\right)\right\} .
\]
By the extremality criterion for unital completely positive maps \cite{MR253059,MR1407488,MR1976867},
it is enough to show that 
\[
V^{*}_{f}\left(I_{H^{2}}\otimes T\right)V_{f}=0
\]
implies $T=0$.

For $z,w\in\mathbb{D}$, 
\[
\left\langle k_{z},V^{*}_{f}\left(I_{H^{2}}\otimes T\right)V_{f}k_{w}\right\rangle =k\left(f\left(z\right),f\left(w\right)\right)\left\langle c_{f,z},Tc_{f,w}\right\rangle _{\mathcal{H}\left(f\right)}.
\]
The Szegő kernel does not vanish on $\mathbb{D}\times\mathbb{D}$.
Therefore 
\[
\left\langle c_{f,z},Tc_{f,w}\right\rangle =0
\]
for all $z,w\in\mathbb{D}$. Since the kernel functions span $\mathcal{H}\left(f\right)$,
we have $T=0$. 

The inclusion \prettyref{eq:ext1} is proper, for example, because
$S\mapsto M^{*}_{z}SM_{z}$ is extreme but is not equal to $\Phi_{f}$
for any $f\in\mathcal{S}\left(\mathbb{D}\right)$.
\end{proof}

For inner functions, the associated endomorphisms and their Cuntz
families are studied in \cite{MR4456098}. The next result places
that case within the family $\left\{ \Phi_{f}\right\} $.
\begin{thm}
\label{thm:4-7} Let $f:\mathbb{D}\to\mathbb{D}$ be holomorphic.
\begin{enumerate}
\item The map $\Phi_{f}$ is a unital endomorphism of $B\left(H^{2}\right)$
if and only if $f$ is inner.
\item The map $\Phi_{f}$ has finite Kraus rank if and only if $f$ is a
finite Blaschke product. In that case, 
\[
\text{Kraus rank}\left(\Phi_{f}\right)=\deg\left(f\right).
\]
\item The map $\Phi_{f}$ is an automorphism of $B\left(H^{2}\right)$ if
and only if $f$ is a disk automorphism. 
\end{enumerate}
\end{thm}

\begin{proof}
Suppose first that $f$ is inner. Then 
\[
\mathcal{H}\left(f\right)=H^{2}\ominus fH^{2}
\]
with equality of Hilbert space norms \cite{MR1289670}. Multiplication
by $f$ is a pure isometry, and its Wold decomposition gives 
\[
H^{2}=\bigoplus^{\infty}_{n=0}f^{n}\mathcal{H}\left(f\right).
\]
Define 
\[
U_{f}:H^{2}\otimes\mathcal{H}\left(f\right)\longrightarrow H^{2}
\]
by 
\[
U_{f}\left(e_{n}\otimes u\right)=f^{n}u.
\]
The preceding orthogonal decomposition shows that $U_{f}$ is unitary.
For $w\in\mathbb{D}$, 
\[
U^{*}_{f}k_{w}=k_{f\left(w\right)}\otimes c_{f,w}=V_{f}k_{w}.
\]
Thus $V_{f}=U^{*}_{f}$ and 
\[
\Phi_{f}\left(S\right)=U_{f}\left(S\otimes I_{\mathcal{H}\left(f\right)}\right)U^{*}_{f}.
\]
It follows that $\Phi_{f}$ is an endomorphism.

Conversely, suppose that $\Phi_{f}$ is an endomorphism. Since $M^{*}_{z}M_{z}=I$,
\prettyref{eq:4-3} gives 
\[
\begin{aligned}I & =\Phi_{f}\left(M^{*}_{z}M_{z}\right)\\
 & =\Phi_{f}\left(M_{z}\right)^{*}\Phi_{f}\left(M_{z}\right)=M^{*}_{f}M_{f}.
\end{aligned}
\]
Hence multiplication by $f$ is an isometry on $H^{2}$, so $f$ is
inner. This proves the first assertion.

By \prettyref{cor:4-5}, the Kraus rank of $\Phi_{f}$ is $\dim\mathcal{H}\left(f\right)$.
The finite-dimensional characterization of de Branges-Rovnyak spaces
gives 
\[
\dim\mathcal{H}\left(f\right)<\infty
\]
if and only if $f$ is a finite Blaschke product, and in that case
\[
\dim\mathcal{H}\left(f\right)=\deg\left(f\right).
\]
See \cite{MR1289670,MR4456098}. 

If $f$ is a disk automorphism, then it is a Blaschke product of degree
one. The unitary $U_{f}$ above shows that $\Phi_{f}$ is an automorphism.

Conversely, suppose that $\Phi_{f}$ is an automorphism. It is then
an endomorphism, so $f$ is inner. An automorphism of $B\left(H^{2}\right)$
has Kraus rank one. Hence 
\[
\dim\mathcal{H}\left(f\right)=1,
\]
and $f$ is a Blaschke product of degree one. Thus $f$ is a disk
automorphism. 
\end{proof}

The family $\left\{ \Phi_{f}\right\} $ is compatible with composition.
Let $f,g:\mathbb{D}\to\mathbb{D}$ be holomorphic. A direct calculation
gives 
\begin{equation}
c_{g\circ f}\left(z,w\right)=c_{f}\left(z,w\right)c_{g}\left(f\left(z\right),f\left(w\right)\right).\label{eq:4-8}
\end{equation}

In terms of the second kernel, this identity says that the Kraus multiplicity
space for a composition embeds naturally into the tensor product of
the two multiplicity spaces.
\begin{prop}
\label{prop:4-8} There is a canonical isometry 
\[
J_{f,g}:\mathcal{H}\left(g\circ f\right)\longrightarrow\mathcal{H}\left(g\right)\otimes\mathcal{H}\left(f\right)
\]
defined on kernel functions by 
\begin{equation}
J_{f,g}c_{g\circ f,w}=c_{g,f\left(w\right)}\otimes c_{f,w}.\label{eq:4-9}
\end{equation}
Moreover, 
\begin{equation}
\Phi_{f}\circ\Phi_{g}=\Phi_{g\circ f}.\label{eq:4-10}
\end{equation}
\end{prop}

\begin{proof}
For $z,w\in\mathbb{D}$, \prettyref{eq:4-8} gives 
\[
\left\langle c_{g,f\left(z\right)}\otimes c_{f,z},c_{g,f\left(w\right)}\otimes c_{f,w}\right\rangle =c_{g}\left(f\left(z\right),f\left(w\right)\right)c_{f}\left(z,w\right)=c_{g\circ f}\left(z,w\right).
\]
Thus \prettyref{eq:4-9} extends to an isometry.

On the kernel functions of $H^{2}$, 
\begin{equation}
\left(V_{g}\otimes I_{\mathcal{H}\left(f\right)}\right)V_{f}=\left(I_{H^{2}}\otimes J_{f,g}\right)V_{g\circ f}.\label{eq:4-11}
\end{equation}
Indeed, both sides send $k_{w}$ to 
\[
k_{g\left(f\left(w\right)\right)}\otimes c_{g,f\left(w\right)}\otimes c_{f,w}.
\]
Using \prettyref{eq:4-11} in the Stinespring formulas gives \prettyref{eq:4-10}. 
\end{proof}

\begin{cor}
\label{cor:4-9} For $n\geq1$, 
\begin{equation}
c_{f^{\circ n}}\left(z,w\right)=\prod^{n-1}_{j=0}c_{f}\left(f^{\circ j}\left(z\right),f^{\circ j}\left(w\right)\right).\label{eq:4-12}
\end{equation}
Moreover, 
\[
\Phi^{n}_{f}=\Phi_{f^{\circ n}}.
\]
There is an isometry 
\[
J_{f,n}:\mathcal{H}\left(f^{\circ n}\right)\longrightarrow\mathcal{H}\left(f\right)^{\otimes n}
\]
given on kernel functions by 
\[
J_{f,n}c_{f^{\circ n},w}=c_{f,f^{\circ\left(n-1\right)}\left(w\right)}\otimes\cdots\otimes c_{f,f\left(w\right)}\otimes c_{f,w}.
\]
\end{cor}

\begin{proof}
Apply \prettyref{prop:4-8} repeatedly. 
\end{proof}

\begin{example}
\label{exa:4-10} Let 
\[
f\left(z\right)=z^{n},\qquad n\geq2.
\]
Then 
\[
c_{f}\left(z,w\right)=\frac{1-z^{n}\overline{w}^{n}}{1-z\overline{w}}=\sum^{n-1}_{j=0}z^{j}\overline{w}^{j}.
\]
Thus 
\[
\mathcal{H}\left(f\right)=\text{span}\left\{ 1,z,\ldots,z^{n-1}\right\} .
\]
Let $e_{m}\left(z\right)=z^{m}$. The Stinespring isometry satisfies
\[
V_{f}e_{nq+j}=e_{q}\otimes e_{j}
\]
for $q\geq0$ and $0\leq j<n$.

For $0\leq j<n$, define 
\[
W_{j}h\left(z\right)=z^{j}h\left(z^{n}\right).
\]
Then 
\[
W_{j}e_{q}=e_{nq+j}.
\]
The operators $W_{0},\ldots,W_{n-1}$ are isometries with mutually
orthogonal ranges and 
\[
\sum^{n-1}_{j=0}W_{j}W^{*}_{j}=I.
\]
The Kraus representation is 
\[
\Phi_{f}\left(S\right)=\sum^{n-1}_{j=0}W_{j}SW^{*}_{j}.
\]
In particular, $\Phi_{f}$ is a unital endomorphism of Kraus rank
$n$. 
\end{example}

\section{Kernel dynamics}\label{sec:5}

The composition formula from \prettyref{prop:4-8} makes it natural
to ask what happens under repeated application of $\Phi_{f}$. For
background on composition operators and the iteration of holomorphic
self-maps, see, for example, \cite{MR1397026}.

Strong mixing and asymptotic stability for unital completely positive
maps have been studied in a general operator-algebraic setting \cite{MR2060791,MR2329688},
but here the two-kernel view gives a direct description of the iterates.
We show below that the Szegő kernel vectors follow the orbit of $f$,
and the de Branges-Rovnyak kernel supplies the scalar coefficient.
When the iterates of $f$ converge to a point $a\in\mathbb{D}$, this
formula shows that the preadjoint iterates of $\Phi_{f}$ converge
in trace norm to the map 
\[
\rho\longmapsto\text{Tr}\left(\rho\right)\rho_{a}.
\]
So every normal state converges to the normalized Szegő kernel state
at $a$. This trace-norm mixing result for the family $\left\{ \Phi_{f}\right\} $
does not seem to have appeared in this kernel form. 

Let $\mathcal{S}_{1}\left(H^{2}\right)$ denote the trace-class operators
on $H^{2}$. For $z,w\in\mathbb{D}$, put 
\[
\theta_{w,z}=\left|k_{w}\right\rangle \left\langle k_{z}\right|.
\]

\begin{prop}
\label{prop:5-1} For every $z,w\in\mathbb{D}$, 
\begin{equation}
\Phi_{f,*}\left(\theta_{w,z}\right)=c_{f}\left(z,w\right)\left|k_{f\left(w\right)}\right\rangle \left\langle k_{f\left(z\right)}\right|.\label{eq:5-1}
\end{equation}
Consequently, for every $n\geq1$, 
\begin{equation}
\Phi^{n}_{f,*}\left(\theta_{w,z}\right)=c_{f^{\circ n}}\left(z,w\right)\left|k_{f^{\circ n}\left(w\right)}\right\rangle \left\langle k_{f^{\circ n}\left(z\right)}\right|.\label{eq:5-2}
\end{equation}
\end{prop}

\begin{proof}
For $S\in B\left(H^{2}\right)$, we have 
\[
\begin{aligned}\text{Tr}\left(\Phi_{f,*}\left(\theta_{w,z}\right)S\right) & =\left\langle k_{z},\Phi_{f}\left(S\right)k_{w}\right\rangle _{H^{2}}\\
 & =\left\langle V_{f}k_{z},\left(S\otimes I_{\mathcal{H}\left(f\right)}\right)V_{f}k_{w}\right\rangle \\
 & =\left\langle k_{f\left(z\right)},Sk_{f\left(w\right)}\right\rangle _{H^{2}}\left\langle c_{f,z},c_{f,w}\right\rangle _{\mathcal{H}\left(f\right)}\\
 & =c_{f}\left(z,w\right)\left\langle k_{f\left(z\right)},Sk_{f\left(w\right)}\right\rangle _{H^{2}}.
\end{aligned}
\]
This proves \prettyref{eq:5-1}. Iterating the formula gives 
\[
\begin{aligned}\Phi^{n}_{f,*}\left(\theta_{w,z}\right) & =\left(\prod^{n-1}_{j=0}c_{f}\left(f^{\circ j}\left(z\right),f^{\circ j}\left(w\right)\right)\right)\cdot\left|k_{f^{\circ n}\left(w\right)}\right\rangle \left\langle k_{f^{\circ n}\left(z\right)}\right|.\end{aligned}
\]
The product formula in \prettyref{eq:4-12} now gives \prettyref{eq:5-2}. 
\end{proof}

\begin{thm}
\label{thm:5-2} Suppose that there is a point $a\in\mathbb{D}$ such
that 
\[
f^{\circ n}\longrightarrow a
\]
locally uniformly on $\mathbb{D}$. Then, for every $\rho\in\mathcal{S}_{1}\left(H^{2}\right)$,
\begin{equation}
\lim_{n\rightarrow\infty}\left\Vert \Phi^{n}_{f,*}\left(\rho\right)-\text{Tr}\left(\rho\right)\rho_{a}\right\Vert _{1}=0,\label{eq:5-3}
\end{equation}
where 
\[
\rho_{a}=\left|\widetilde{k}_{a}\right\rangle \left\langle \widetilde{k}_{a}\right|.
\]
Consequently, for every $S\in B\left(H^{2}\right)$, 
\begin{equation}
\Phi^{n}_{f}\left(S\right)\longrightarrow\left\langle \widetilde{k}_{a},S\widetilde{k}_{a}\right\rangle _{H^{2}}I\label{eq:5-4}
\end{equation}
in the ultraweak topology. 
\end{thm}

\begin{proof}
Fix $z,w\in\mathbb{D}$. Since 
\[
c_{f^{\circ n}}\left(z,w\right)=\frac{1-f^{\circ n}\left(z\right)\overline{f^{\circ n}\left(w\right)}}{1-z\overline{w}},
\]
we have 
\[
c_{f^{\circ n}}\left(z,w\right)\longrightarrow\left(1-\left|a\right|^{2}\right)k\left(z,w\right).
\]
The map $w\mapsto k_{w}$ from $\mathbb{D}$ into $H^{2}$ is norm
continuous. Hence 
\[
\left|k_{f^{\circ n}\left(w\right)}\right\rangle \left\langle k_{f^{\circ n}\left(z\right)}\right|\longrightarrow\left|k_{a}\right\rangle \left\langle k_{a}\right|
\]
in trace norm. It follows from \prettyref{eq:5-2} that 
\[
\Phi^{n}_{f,*}\left(\theta_{w,z}\right)\longrightarrow k\left(z,w\right)\rho_{a}
\]
in trace norm, since 
\[
\rho_{a}=\left(1-\left|a\right|^{2}\right)\left|k_{a}\right\rangle \left\langle k_{a}\right|.
\]
Moreover, 
\[
\text{Tr}\left(\theta_{w,z}\right)=\left\langle k_{z},k_{w}\right\rangle _{H^{2}}=k\left(z,w\right).
\]
Thus \prettyref{eq:5-3} holds on the linear span of the operators
$\theta_{w,z}$.

The kernel functions span a dense subspace of $H^{2}$. It follows
that 
\[
\mathcal{D}=\text{span}\left\{ \theta_{w,z}:z,w\in\mathbb{D}\right\} 
\]
is dense in $\mathcal{S}_{1}\left(H^{2}\right)$.

Each $\Phi^{n}_{f,*}$ is a contraction in trace norm. The map 
\[
L\left(\rho\right)=\text{Tr}\left(\rho\right)\rho_{a}
\]
is also a contraction. Given $\rho\in\mathcal{S}_{1}\left(H^{2}\right)$
and $\sigma\in\mathcal{D}$, we have 
\[
\begin{aligned}\left\Vert \Phi^{n}_{f,*}\left(\rho\right)-L\left(\rho\right)\right\Vert _{1} & \leq\left\Vert \Phi^{n}_{f,*}\left(\rho-\sigma\right)\right\Vert _{1}+\left\Vert \Phi^{n}_{f,*}\left(\sigma\right)-L\left(\sigma\right)\right\Vert _{1}+\left\Vert L\left(\sigma-\rho\right)\right\Vert _{1}\\
 & \leq2\left\Vert \rho-\sigma\right\Vert _{1}+\left\Vert \Phi^{n}_{f,*}\left(\sigma\right)-L\left(\sigma\right)\right\Vert _{1}.
\end{aligned}
\]
Density of $\mathcal{D}$ now proves \prettyref{eq:5-3}.

Finally, for $\rho\in\mathcal{S}_{1}\left(H^{2}\right)$ and $S\in B\left(H^{2}\right)$,
\[
\begin{aligned}\text{Tr}\left(\rho\Phi^{n}_{f}\left(S\right)\right) & =\text{Tr}\left(\Phi^{n}_{f,*}\left(\rho\right)S\right)\\
 & \longrightarrow\text{Tr}\left(\rho\right)\text{Tr}\left(\rho_{a}S\right)=\text{Tr}\left(\rho\right)\left\langle \widetilde{k}_{a},S\widetilde{k}_{a}\right\rangle _{H^{2}}.
\end{aligned}
\]
This proves \prettyref{eq:5-4}. 
\end{proof}

\begin{cor}
\label{cor:5-3} Under the assumptions of \prettyref{thm:5-2}, 
\begin{equation}
\left\{ S\in B\left(H^{2}\right):\Phi_{f}\left(S\right)=S\right\} =\mathbb{C}I.\label{eq:5-5}
\end{equation}
Moreover, $\rho_{a}$ is the unique normal state invariant under $\Phi_{f,*}$. 
\end{cor}

\begin{proof}
If $\Phi_{f}\left(S\right)=S$, then 
\[
S=\Phi^{n}_{f}\left(S\right)
\]
for every $n\geq1$. By \prettyref{eq:5-4}, 
\[
S=\left\langle \widetilde{k}_{a},S\widetilde{k}_{a}\right\rangle _{H^{2}}I.
\]
Conversely, every scalar multiple of $I$ is fixed because $\Phi_{f}$
is unital.

The convergence of the iterates also implies that $f\left(a\right)=a$.
Indeed, 
\[
f^{\circ\left(n+1\right)}=f\circ f^{\circ n}\longrightarrow f\left(a\right),
\]
while the left hand side converges to $a$. Hence $f\left(a\right)=a$,
and \prettyref{eq:4-4} gives 
\[
\Phi_{f,*}\left(\rho_{a}\right)=\rho_{a}.
\]

If $\rho$ is any normal invariant state, then 
\[
\rho=\Phi^{n}_{f,*}\left(\rho\right)\longrightarrow\rho_{a}
\]
in trace norm by \prettyref{thm:5-2}. Therefore $\rho=\rho_{a}$. 
\end{proof}

\begin{rem}
The assignment 
\[
f\longmapsto\Phi_{f}
\]
is injective. Indeed, if $\Phi_{f}=\Phi_{g}$, then 
\[
M_{f}=\Phi_{f}\left(M_{z}\right)=\Phi_{g}\left(M_{z}\right)=M_{g},
\]
and hence $f=g$. Together with \prettyref{prop:4-8}, this gives
a faithful reversed-composition representation of the semigroup of
holomorphic self-maps of $\mathbb{D}$ by extreme unital completely
positive maps on $B\left(H^{2}\right)$. 
\end{rem}

\bibliographystyle{amsalpha}
\bibliography{ref}

\end{document}